\def\Jcole#1{{#1_\omega}}
\def\jt{\Jcole}
\def\JCantor{\mathcal{C}}
\newtheorem{theorem}{Theorem}[section]
\newtheorem{lemma}[theorem]{Lemma}
\newtheorem{corollary}[theorem]{Corollary}
\newtheorem{proposition}[theorem]{Proposition}
\theoremstyle{definition}
\newtheorem{definition}[theorem]{Definition}
\newtheorem{question}[theorem]{Question}
\newcommand{\C}{\mathbb{C}}
\newcommand{\N}{\mathbb{N}}
\newcommand{\Z}{\mathbb{Z}}
\newcommand{\R}{\mathbb{R}}
\newcommand{\T}{\mathbb{T}}
\newcommand{\sB}{{\scr B}}
\newcommand{\sC}{{\scr C}}
\newcommand{\sF}{{\scr F}}
\newcommand{\sG}{{\scr G}}
\newcommand{\sH}{{\scr H}}
\newcommand{\sJ}{{\scr J}}
\newcommand{\sK}{{\scr K}}
\newcommand{\bthm}{\begin{theorem}}
\newcommand{\ethm}{\end{theorem}}
\newcommand{\blem}{\begin{lemma}}
\newcommand{\elem}{\end{lemma}}
\newcommand{\bcor}{\begin{corollary}}
\newcommand{\ecor}{\end{corollary}}
\newcommand{\bprop}{\begin{proposition}}
\newcommand{\eprop}{\end{proposition}}
\newcommand{\bdefn}{\begin{definition}}
\newcommand{\edefn}{\end{definition}}
\newcommand{\bpf}{\begin{proof}}
\newcommand{\epf}{\end{proof}}
\font\teneufm=eufm10 scaled \magstep1
   \font\seveneufm=eufm7 scaled \magstep1
   \font\fiveeufm=eufm5 scaled \magstep1
\font\tenscr=rsfs10  scaled \magstep1
\font\sevenscr=rsfs7  scaled \magstep1
\font\fivescr=rsfs5  scaled \magstep1
\def\scr{\fam\scrfam}
\def\vep {\varepsilon}
\def \sm {\setminus}
\def\tsigma {\widetilde\sigma}
\def\ta {\widetilde a}
\def\tB {\widetilde B}
\def\tss{\widetilde {\sigma^*}}
\def\tfs{\widetilde {f^*}}
\def\talpha{\widetilde \alpha}
\def \ma {\mathfrak{M}_A}
\def\crx#1{C_\R(#1)}
\newcommand{\ra}{\rightarrow}
\newcommand{\ol}{\overline}
\def\pair #1{(#1_1, #1_2)}
\def\xaj#1{{{X_{A_#1}}}}
\def\inv{\mathop{\rm inv}\nolimits}
\begin{document}
\title[regular on the Cantor set]{A nontrivial uniform algebra\\ regular on the Cantor set}
\author{J. F. Feinstein}
\address{School of Mathematical Sciences, University of Nottingham, University Park, Nottingham NG7 2RD, UK }
\email{Joel.Feinstein@nottingham.ac.uk}
\author{Alexander J. Izzo}
\address{Department of Mathematics and Statistics, Bowling Green State University, Bowling Green, OH 43403}
\email{aizzo@bgsu.edu}
\thanks{This research was supported through the program \lq\lq Oberwolfach Research Fellows\rq\rq\ by the Mathematisches Forschungsinstitut Oberwolfach in 2024.   The second author was also partially supported by NSF Grant DMS-1856010.}

\subjclass[2020]{Primary 46J10, 46J15, 30H50, Secondary 30H05, 30H10}
\keywords {uniform algebra, regular uniform algebra, normal uniform algebra, logmodular, essential, Shilov boundary, Cantor set, root extension}

\begin{abstract}
We prove$\vphantom{\widehat{\widehat{\widehat{\widehat{\widehat{\widehat{\widehat X}}}}}}}$ the existence of a nontrivial uniform algebra that is logmodular and regular on the Cantor set.  As a consequence, we obtain that for every compact metrizable space $X$ without isolated points there exists a nontrivial essential uniform algebra that is logmodular and regular on $X$.  In particular, there exists a nontrivial essential uniform algebra that is logmodular and regular on the closed unit interval.  Our algebras seem to be the first known uniform algebras that are regular on a metrizable space but are not normal.
\end{abstract}

\maketitle

\vskip -2.21  true in
\centerline{\footnotesize\it Dedicated to Pamela Gorkin}
\vskip 2.21 true in

%
%
%
%

\section{Introduction}

The motivation for this paper comes from a long standing question of I. M. Gelfand~\cite{Gelfand}: Is the algebra $C([0,1])$ the only uniform algebra with maximal ideal space the closed unit interval $[0,1]$?  One might be tempted to imagine that $C([0,1])$ is the only uniform algebra defined on $[0,1]$, but many counterexamples have been given, the first of which was found by John Wermer~\cite{Wermer-Cantor}.  Toward answering Gelfand's question, it is natural to consider various uniform algebra properties and determine which of them forces a uniform algebra defined on $[0,1]$ to be $C([0,1])$.  Donald Wilken~\cite{Wilken} proved that every strongly regular uniform algebra on $[0,1]$ is $C([0,1])$.  It is unknown whether every normal uniform algebra on $[0,1]$ is $C([0,1])$.  We will answer the related question of whether every uniform algebra that is regular on $[0,1]$ is $C([0,1])$ by constructing a counterexample.  Actually, a more fundamental question is whether every uniform algebra that is regular on the Cantor set is trivial, and the bulk of the paper will be devoted to constructing an example showing that this is not the case.
As we will see, once this is done it follows easily that for every compact Hausdorff space $X$ that contains a Cantor set, there exists a nontrivial uniform algebra that is regular on $X$.  In particular, there exists a nontrivial uniform algebra that is regular on the closed unit interval $[0,1]$.

Throughout the paper, all spaces will be assumed to be Hausdorff. We will recall in the next section some of the standard terminology and notation used in this introduction and throughout the paper.

It is well known that every uniform algebra that is regular on its maximal ideal space is normal. The first nontrivial normal uniform algebra was found by Robert McKissick \cite{Mc}. However, the  first example of a nontrivial uniform algebra that is regular on its Shilov boundary appears to be the fiber algebra $A_\alpha$ 
of  Kenneth Hoffman and Isadore Singer \cite[pp.~187--190]{Hoffman} (see also \cite[Section 7]{Hoffman-Singer}). The fiber algebra $A_\alpha$ seems also to be the only example in the literature (until now) of a uniform algebra that is regular on its Shilov boundary but is not normal.  (Note that a uniform algebra that is regular on its Shilov boundary is normal if and only if its Shilov boundary coincides with its maximal ideal space.) In fact, $A_\alpha$ has the stronger property that $|A_\alpha|$ ($=\{ |f| : f\in A_\alpha\}$) is normal on the Shilov boundary for $A_\alpha$. However, the Shilov boundary for $A_\alpha$ is not metrizable. Our examples appear to be the first known examples of uniform algebras $A$ on compact \emph{metrizable} spaces $X$ such that $A$ is regular on $X$ but $A$ is not normal.  Like $A_\alpha$, our examples have the property that $|A|$ is normal on $X$.

The fiber algebra $A_\alpha$  has the additional property that it is logmodular, and our approach allows us to arrange that our examples are logmodular as well.
In fact, it seems that we have constructed the first known example of a nontrivial logmodular algebra on the Cantor set. 
There is a nontrivial Dirichlet algebra on $[0,1]$ due to Andrew Browder and Wermer~\cite{Browder-Wermer}.  This suggests the following question which remains open.

\begin{question}
Does there exist a nontrivial uniform algebra $A$ on $[0,1]$ such that $A$ is Dirichlet and regular on $[0,1]$?
\end{question}

Since every Dirichlet algebra on a totally disconnected compact space is trivial~\cite[p.~182]{Hoffman}, the approach used in this paper will not yield such an algebra.
It seems that the only construction of a nontrivial Dirichlet algebra known to be regular on a space is the one recently given by the second author in~\cite[Theorem~1.7]{Izzo}.  The Dirichlet algebra given there is in fact strongly regular and is defined on a compact metrizable space of topological dimension 1 with trivial \v Cech cohomology in all degrees.  As already mentioned, by a theorem of Wilken there are no nontrivial strongly regular uniform algebras on $[0,1]$.

We now present our main results and discuss our approach to proving them.
Our main theorem is the following.

\bthm\label{main-theorem}
There exists a nontrivial uniform algebra $A$ on the Cantor set $\JCantor$ such that $A$ is logmodular and regular on $\JCantor$; furthermore $A$ can be chosen such that $|A|$ is normal on $\JCantor$.
\ethm

In connection with this theorem, note that no nontrivial uniform algebra on a totally disconnected space can be normal.  This  follows immediately from the definition of normality for a uniform algebra and the Stone-Weierstrass theorem since for every pair of distinct points $p$ and $q$ in a totally disconnected compact space there is a separation $P, Q$ of the space with $p\in P$ and $q\in Q$.

Given a compact space $X$ and a subset $B$ of $C(X)$, the quotient space of $X$ whose elements are the common level sets of the functions in $B$ is a compact Hausdorff space which we will denote by $X_B$.
When the set $B$ is a uniformly closed unital subalgebra of $C(X)$, the functions on $X_B$ induced by the functions in $B$ form a uniform algebra on $X_B$ which we will denote by $\tB$.  Explicitly, $\tB=\{ f\in C(X_B) : f\circ \pi \in B\}$, where $\pi: X\ra X_B$ is the canonical quotient map.

We will establish Theorem~\ref{main-theorem} by applying the following result to the Hoffman-Singer fiber algebra.

\bthm\label{main-construction}
Let $A$ be a nontrivial uniform algebra on a compact totally disconnected space $X$.  If $|A|$ is normal on $X$, or if $A$ is logmodular on $X$, or if $|A|$ is normal on $X$ and $A$ is logmodular on $X$, then there exists a separable closed unital subalgebra $B$ of $A$ such that
\begin{enumerate}
\item[(a)] $|\tB|$ is normal on $X_B$, or $\tB$ is logmodular on $X_B$, or $|\tB|$ is normal on $X_B$ and $\tB$ is logmodular on $X_B$, respectively,
\item[(b)] $\tB$ is a nontrivial uniform algebra, and
\item[(c)] $X_B$ is totally disconnected.
\end{enumerate}
\ethm

From Theorem~\ref{main-theorem} we will obtain results for more general compact spaces, provided that they contain a Cantor set.  
The uniform algebras in each of these theorems are regular on the given space but are not normal.  We remind the reader that there are no nontrivial normal uniform algebras on a totally disconnected space.  As noted earlier, the existence of nontrivial normal Dirichlet algebras is proven in \cite{Izzo}.

\bthm\label{examples-arbitrary-spaces}
Let $X$ be a compact space that contains a Cantor set.  Then there exists a nontrivial uniform algebra $A$ on $X$ such that $A$ is logmodular and $|A|$ is normal on $X$ but $A$ is not normal.
\ethm

As an immediate corollary of Theorem \ref{examples-arbitrary-spaces} we obtain a nontrivial uniform algebra on $[0,1]$ that is regular on $[0,1]$, and indeed has the stronger properties provided by that theorem, thus achieving the objective presented in the opening paragraph of our paper.

\bcor\label{unit-interval}
There exists a nontrivial uniform algebra $A$ on $[0,1]$ such that $A$ is logmodular and $|A|$ is normal on $[0,1]$ but $A$ is not normal.
\ecor

Furthermore, using an approach we introduced in \cite{FI}, we will obtain \emph{essential} uniform algebras of this type on every compact metrizable space without isolated points.

\bthm\label{essential-arbitrary-spaces}
Let $X$ be a compact metrizable space without isolated points.  Then there exists a nontrivial essential uniform algebra $A$ on $X$ such that $A$ is logmodular and $|A|$ is normal on $X$ but $A$ is not normal.
\ethm

As the above consequences of Theorem~\ref{main-theorem} demonstrate, the Cantor set plays a key role in studying the existence of uniform algebras with particular properties.  It is thus of interest to investigate further what properties uniform algebras on the Cantor set can possess.  One property of particular interest is the presence of dense squares.  The presence of dense squares in a uniform algebra $A$ implies that $A$ has no nontrivial Gleason parts, no nonzero bounded point derivations, and in particular, no analytic discs in its maximal ideal space.  
A theorem of E.~M.~Chirka asserts that there are no nontrivial uniform algebras with dense squares on locally connected spaces \cite[Theorem~13.15 and Lemma~13.16]{Stout} (see also \cite{Chirka}).  In contrast, a construction of Brian Cole yields uniform algebras that do have dense squares (or even every element a square) on certain nonlocally connected spaces.  In general though, as remarked by Lee Stout \cite[p.~200]{Stout} \lq\lq it is not at all evident what topological properties" these spaces have.  However, it can be seen from the arguments in the last section of our paper that applying a certain form of the Cole construction to any uniform algebra on the Cantor set yields another uniform algebra on the Cantor set that has dense squares.  By applying the construction to the uniform algebra of Theorem~\ref{main-theorem} we will obtain the following result.

\bthm\label{optional-theorem}
There exists a nontrivial uniform algebra $A$ on the Cantor set $\JCantor$ such that the set $\{f^2:f\in A\}$ is dense in $A$ and $|A|$ is normal on $\JCantor$.
\ethm

We do not know what can be said about logmodularity of uniform algebras with the properties in this theorem.

In Section~\ref{prelim} we recall some standard notation, terminology, and results.
In Section~\ref{further-prelim} we present results that ensure that certain spaces that will arise are free of isolated points.
In Section~\ref{section-main-construction} we prove Theorems~\ref{main-theorem} and \ref{main-construction}.  In Section~\ref{general-spaces} we prove Theorem~\ref{examples-arbitrary-spaces}, Corollary~\ref{unit-interval}, and Theorem~\ref{essential-arbitrary-spaces}, along with some related results concerning preservation 
of certain uniform algebra properties by the construction introduced in~\cite{FI}.  In Section~\ref{dense-squares} we prove that Theorem~\ref{optional-theorem} follows from Theorem~\ref{main-theorem} by using Cole's method of root extensions.

It is a pleasure to dedicate this paper to Pamela Gorkin, a major contributor to the study of $H^\infty$ with whom the authors have always enjoyed talking.


\section{Preliminaries}\label{prelim}

In this section we recall some standard notation and terminology, and 
some of the standard theory of uniform algebras, as can be found in (for example) \cite{Stout}. Throughout the section, $X$ denotes a compact (Hausdorff) space.
Also, except where otherwise specified, we work with complex algebras.

We denote the set of strictly positive integers by $\N$.

A \emph{clopen} subset of $X$ is a subset of $X$ that is both closed and open in $X$. The (compact Hausdorff) space $X$ is totally disconnected if and only the collection of clopen subsets of $X$ is a base for the topology on $X$. In particular this is the case for the usual Cantor middle-thirds set, which we denote by $\JCantor$.  
We will make tacit use of the standard result that every nonempty, totally disconnected, compact, metrizable space without isolated points is homeomorphic to $\JCantor$, and we call any such space \emph{a Cantor set}.

We denote by $C(X)$ the algebra of all continuous complex-valued functions on $X$ with the supremum norm
$ \|f\|_{X} = \sup\{ |f(x)| : x \in X \}$.
The \emph{real} algebra of all continuous real-valued functions on $X$ we 
denote by $\crx{X}$.

A \emph{uniform algebra} $A$ on $X$ is a closed subalgebra of $C(X)$ that contains the constant functions and separates the points of $X$.
The uniform algebra $A$ on $X$ is said to be \emph{nontrivial} if $A\neq C(X)$.
We denote the maximal ideal space of $A$ by $\ma$. 
The uniform algebra $A$ on $X$ is said to be \emph{natural} if $\ma=X$ (under the usual identification of a point of $X$ with the corresponding complex homomorphism).
We denote by $\inv{A}$ the group of invertible elements of $A$ and by $|A|$ the set of nonnegative real-valued functions $\{|f|: f\in A\} \subset \crx{X}$. For $\sG\subset \inv{A}$ we denote by $\sG^{-1}$ the set $\{g^{-1}:g\in \sG\}$.

Consider a set of functions $B\subset C(X)$, where $B$ need not be an algebra. 
We say that $B$ is \emph{regular on $X$} if for each closed subset $K_0$ of $X$ and each point $x$ of $X\setminus K_0$, there exists a function $f$ in $B$ such that $f=0$ on $K_0$ and $f(x)=1$.  We say that $B$ is \emph{normal on $X$} if for each pair of disjoint closed subsets $K_0$ and $K_1$ of $X$, there exists a function $f$ in $B$ such that $f=0$ on $K_0$ and $f=1$ on $K_1$.
 
Throughout the remainder of this section, $A$ will denote a uniform algebra on $X$.

The uniform algebra $A$ on $X$ is \emph{regular} or \emph{normal} if $A$ is natural and is regular on $X$ or normal on $X$, respectively.
Clearly, if $A$ is normal on $X$, then $|A|$ is normal on $X$, and if $|A|$ is normal on $X$ then $A$ is regular on $X$. Also, if $A$ is regular on $X$ then $X$ is the Shilov boundary for $A$.

It is standard that if $A$ is normal on $X$, then $A$ is natural, and hence, normal.  It is also standard that if $A$ is regular, then $A$ is normal (and the converse holds trivially).  However, the condition that $A$ is regular \emph{on} $X$, does not imply that $A$ is natural, and hence, does not imply that $A$ is regular!  Indeed, as mentioned in the introduction, the Hoffman-Singer fiber algebra $A_\alpha$ is an example of this phenomenon, as are also the examples we construct.  
Note that, as mentioned in the introduction,
a uniform algebra that is regular on its Shilov boundary is regular (equivalently, normal) if and only if its Shilov boundary coincides with its maximal ideal space.

We leave the proof of the following easy lemma to the reader.

\begin{lemma}
\label{equivalent-|A|}
Let $A$ be a uniform algebra on a compact space $X$. Then $|A|$ is normal on $X$ if and only if the following condition holds: for every closed subset $E$ of $X$ and every $y\in X\setminus E$, there exists $g\in A$ such that $g$ vanishes on a neighborhood of $y$ and $|g|=1$ on $E$.
\end{lemma}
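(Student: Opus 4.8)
The plan is to prove the two implications separately. The forward direction will follow almost immediately from the normality of $|A|$ once one uses the regularity of compact Hausdorff spaces to replace the point $y$ by a closed neighborhood; the reverse direction will be a routine compactness argument producing the required function as a finite product.

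For the forward implication, I would assume $|A|$ is normal on $X$ and take a closed set $E\subset X$ together with a point $y\in X\setminus E$. Since $X$ is compact Hausdorff, hence regular, there is an open set $U$ with $y\in U$ and $\overline U\cap E=\emptyset$. Applying normality of $|A|$ to the disjoint closed sets $\overline U$ and $E$ yields $g\in A$ with $|g|=0$ on $\overline U$ and $|g|=1$ on $E$. Then $g$ vanishes on the open neighborhood $U$ of $y$ and satisfies $|g|=1$ on $E$, which is exactly the stated condition.

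For the reverse implication, I would assume the stated condition and take disjoint closed sets $K_0,K_1\subset X$. Setting $E=K_1$, for each $y\in K_0$ (so $y\notin E$) the hypothesis furnishes $g_y\in A$ vanishing on an open neighborhood $U_y$ of $y$ with $|g_y|=1$ on $K_1$. The family $\{U_y\}_{y\in K_0}$ is an open cover of the compact set $K_0$, so finitely many sets $U_{y_1},\dots,U_{y_n}$ cover $K_0$. Put $g=g_{y_1}\cdots g_{y_n}\in A$. Each point of $K_0$ lies in some $U_{y_i}$, where $g_{y_i}=0$, so $g=0$ on $K_0$; and on $K_1$ we get $|g|=\prod_{i=1}^n|g_{y_i}|=1$. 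Hence $|g|\in|A|$ equals $0$ on $K_0$ and $1$ on $K_1$, showing $|A|$ is normal on $X$.

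There is no genuine obstacle here. The only point requiring a little care is the forward direction, where one must pass from the single point $y$ to a closed neighborhood $\overline U$ (via regularity of $X$) so as to obtain \emph{vanishing on a neighborhood} rather than merely vanishing at $y$; everything else is immediate from the definitions and the closure of $A$ under multiplication.
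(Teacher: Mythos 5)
Your proof is correct, and since the paper explicitly leaves this lemma to the reader, your argument is exactly the intended routine one: regularity of the compact Hausdorff space to fatten $y$ into a closed neighborhood for the forward direction, and a compactness-plus-finite-product argument (using that $A$ is closed under multiplication and that the factors have modulus $1$ on $K_1$) for the reverse.
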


The uniform algebra $A$ is \emph{strongly regular at} $x\in X$ if each function in $A$ that vanishes at $x$ is a (uniform) limit of functions in $A$ that vanish on a neighborhood of $x$.  The uniform algebra $A$ is \emph{strongly regular} if $A$ is strongly regular at every point of $X$.  By a result of Wilken~\cite[Corollary~1]{Wilken}, every strongly regular uniform algebra is normal.

The uniform algebra $A$ on $X$ is \emph{logmodular} (on $X$) if the set $\{\log|f|:f \in \inv{A}\}$ is (uniformly) dense in $\crx{X}$, and $A$ is \emph{Dirichlet} (on $X$) if $\{\Re f: f \in A\}$ is dense in $\crx{X}$.  (We denote the real and imaginary parts of a complex-valued function $f$ by $\Re f$ and $\Im f$, respectively.) Every Dirichlet algebra is logmodular, and if $A$ is logmodular on $X$ then the Shilov boundary for $A$ is $X$.

The \emph{essential set} $E$ for a uniform algebra $A$ on a space $X$ is the smallest closed subset of $X$
such that $A$ contains every continuous complex-valued function on $X$ that vanishes on $E$.
For a proof of the existence of the essential set see~\cite[Section~2--8]{Browder}.
Note that $A$ contains every continuous function whose restriction to $E$ lies
in the restriction of $A$ to $E$. The uniform algebra $A$ is said to be \emph{essential} if
$E = X$.

A subset $E$ of $X$ is a \emph{set of antisymmetry} (for the uniform algebra $A$ on $X$) if every function $f$ in $A$ that is real-valued on $E$ is constant on $E$. If $X$ itself is a set of antisymmetry, then $A$ is said to be an \emph{antisymmetric} uniform algebra. It is standard that the maximal sets of antisymmetry are closed in $X$, and that they form a partition of $X$. Moreover, if $Y$ is a maximal set of antisymmetry, then $A|Y$ is an antisymmetric uniform algebra on $Y$. By the Bishop-Stone-Weierstrauss theorem
(see~\cite[Theorem~2.7.5]{Browder} or~\cite[Theorem~12.1]{Stout}), if $A$ is nontrivial, then there must be a maximal set of antisymmetry $Y$ such that $A|Y$ is nontrivial.

Every antisymmetric uniform algebra is essential, but there are many essential uniform algebras that are not antisymmetric. For example, the essential uniform algebras constructed in \cite{FI} are far from antisymmetric.


\section{Ensuring the absence of isolated points}\label{further-prelim}

When proving our main  results, once we have constructed a nontrivial uniform algebra $A$ on a compact space $X$ such that $A$ has the properties we want, we sometimes wish to show that $X$ has no isolated points, or else show that we can modify our example to ensure this. Generally we can prove that our constructions do not introduce isolated points, but we can also eliminate isolated points should they arise. This section is devoted to establishing results that help with both approaches.

Recall that a space $X$ is said to be \emph{scattered} if every nonempty subset of $X$ has an isolated point.
A famous theorem of Walter Rudin \cite{Rudin-scat} asserts that if $X$ is a compact scattered space, then there are no nontrivial uniform algebras on $X$.
Stout \cite[p.~119]{Stout} observed that Rudin's theorem can be obtained as a consequence of properties of the maximal antisymmetric decomposition. However, Stout used the Shilov idempotent theorem in this deduction, and here we wish to observe that this deep theorem is not needed.

The following elementary lemma is surely known, but we sketch a proof for the convenience of the reader.  Here, as usual, we denote the Gelfand transform of a Banach algebra element $f$ by $\widehat f$.

\begin{lemma}
Let $A$ be a uniform algebra on  a compact space $X$, let $S$ be the Shilov boundary for $A$, and suppose that $x \in S$ is an isolated point of $S$. Then $x$ is isolated in the maximal ideal space $\ma$ of $A$, and there exists an idempotent $g \in A$ such that $g(x)=1$ and $\widehat g(y) = 0$ for all $y \in \ma\setminus \{x\}$.
\end{lemma}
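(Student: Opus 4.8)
The plan is to use the minimality of the Shilov boundary to produce a function that peaks at $x$, to raise it to high powers, and to check that these powers converge in norm to the desired idempotent $g$. The only nonelementary ingredient will be the existence of representing measures carried by $S$, which follows from Hahn--Banach together with the Riesz representation theorem and so sidesteps the Shilov idempotent theorem. I expect the genuine difficulty to lie in controlling $\widehat g$ at the points of $\ma\setminus S$ that sit off the Shilov boundary, where the peaking estimates give no information.

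First I would record that, since $x$ is isolated in $S$, the set $F:=S\setminus\{x\}$ is closed, hence compact, and proper. By minimality of the Shilov boundary $F$ is not a boundary for $A$, so there is $f\in A$ with $\max_F|\widehat f|<\|\widehat f\|_{\ma}=\|f\|_X$. After normalizing $\|f\|_X=1$ and using that $S$ is a boundary, the maximum modulus on $S$ must be attained at $x$, so $|f(x)|=1$. Setting $\phi=\overline{f(x)}\,f\in A$ then gives $\phi(x)=1$, $\|\phi\|_X=1$, and $s:=\max_F|\phi|<1$.

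Next I would show that the sequence $(\phi^n)$ is Cauchy in $A$. Because $S$ is a boundary we have $\|\cdot\|_X=\|\cdot\|_S$, and on $S$ the difference $\phi^n-\phi^m$ vanishes at $x$ while being bounded by $s^n+s^m$ on $F$; hence $\|\phi^n-\phi^m\|_X\le 2\,s^{\min(n,m)}\to 0$. Thus $\phi^n\to g$ for some $g\in A$, and continuity of multiplication yields $g^2=\lim\phi^{2n}=g$, so $g$ is an idempotent. Passing to the limit on $S$ gives $g|_S=\chi_{\{x\}}$, the indicator function of $\{x\}$; in particular $g(x)=1$.

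It remains to determine $\widehat g$ on all of $\ma$, which is the crux. For each $y\in\ma$ I would choose a representing probability measure $\mu_y$ supported on $S$ (extend the character $h\mapsto\widehat h(y)$ from $A\cong A|_S\subset C(S)$ to $C(S)$ by Hahn--Banach and apply Riesz, the embedding $A\to C(S)$ being isometric since $S$ is a boundary). Then $\widehat g(y)=\int_S g\,d\mu_y=\mu_y(\{x\})$. As $g$ is idempotent, $\widehat g$ takes only the values $0$ and $1$; and if $\widehat g(y)=1$ then $\mu_y=\delta_x$, so $\widehat h(y)=h(x)$ for every $h\in A$, forcing $y=x$ because the Gelfand transforms separate the points of $\ma$. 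Hence $\widehat g(y)=0$ for all $y\neq x$, so $\widehat g=\chi_{\{x\}}$. This single idempotent simultaneously furnishes the function required by the lemma and shows that $\{x\}$ is clopen in $\ma$, i.e.\ that $x$ is isolated in $\ma$.
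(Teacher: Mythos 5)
Your proof is correct, and its first half --- producing a function that peaks at $x$ relative to $S\setminus\{x\}$, taking powers, and extracting the idempotent $g\in A$ with $g|_S=\chi_{\{x\}}$ --- is essentially the paper's argument (the paper obtains its peaking function from the standard neighborhood characterization of Shilov boundary points applied to a neighborhood $U$ of $x$ in $\ma$ with $U\cap S=\{x\}$, whereas you invoke minimality directly: $S\setminus\{x\}$ is closed and proper, hence not a boundary; both are fine). Where you genuinely diverge is at the crux you correctly identified: showing $\widehat g(y)=0$ for $y\in\ma\setminus\{x\}$. You handle this with representing measures: extend the character at $y$ from $A\cong A|_S$ to $C(S)$ by Hahn--Banach, obtain a probability measure $\mu_y$ carried by $S$, and note that $\widehat g(y)=\mu_y(\{x\})\in\{0,1\}$, with the value $1$ forcing $\mu_y=\delta_x$ and hence $y=x$. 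The paper instead uses a purely algebraic trick: given $y\neq x$, pick $h\in A$ with $h(x)=0$ and $\widehat h(y)=1$; then $gh$ vanishes on $S$ (since $g$ vanishes on $S\setminus\{x\}$ and $h$ vanishes at $x$), so $gh=0$ because $S$ is a boundary, whence $\widehat g(y)=\widehat g(y)\widehat h(y)=\widehat{gh}(y)=0$. Both routes avoid the Shilov idempotent theorem, which is the whole point of the lemma in context; the paper's trick is the more elementary, using nothing beyond multiplicativity of characters and the boundary property of $S$, while yours imports the standard but heavier machinery of representing measures on the Shilov boundary (Hahn--Banach, Riesz, and the positivity of norm-one unital extensions --- that positivity is worth stating explicitly, since it is exactly what lets you pass from $\mu_y(\{x\})=1$ to $\mu_y=\delta_x$). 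On the other hand, your measure-theoretic argument is somewhat more conceptual: it shows at once that any idempotent agreeing with $\chi_{\{x\}}$ on $S$ must have Gelfand transform $\chi_{\{x\}}$ on all of $\ma$.
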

\begin{proof}

Let $U$ be a open neighborhood of $x$ in $\ma$ such that $U\cap S = \{x\}$. Since $x$ is in the Shilov boundary for  $A$, there exists $f \in A$ with $\|f\|_X=1$ but $\|\widehat f\|_{\ma\setminus U} < 1.$ Since $\|f\|_S=1$ it follows that $|f(x)|=1$. Multiplying by the constant $\overline{f(x)}$ if necessary, we can assume that $f(x)=1$. Clearly the sequence 
$(f^n)$ converges uniformly on $S$, and hence, converges in $A$ to a function $g\in A$ with $g(x)=1$ and $g(y)=0$ for all $y \in S\setminus \{x\}$.
To see that $\widehat g(y)=0$ for all $y\in\ma\sm\{x\}$, and hence that $x$ is isolated in $\ma$, fix a point $y\in\ma\sm\{x\}$ and consider a function $h\in A$ such that $h(x)=0$ and $\widehat h(y)=1$, and then note that $gh=0$, so $\widehat g(y)=\widehat g(y)\widehat h(y) = \widehat{gh}(y)=0$.
\end{proof}

The following corollaries are now immediate, without the need to appeal to the Shilov idempotent theorem.
\begin{corollary}
Let $A$ be a uniform algebra on a compact space $X$. Suppose that $\ma$ has no isolated points. Then the Shilov boundary for $A$ also has no isolated points. In particular, the conclusion holds whenever $\ma$ is a connected space with more than one point.
\end{corollary}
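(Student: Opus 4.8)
The plan is to read this corollary off the preceding lemma by contraposition, with only one small additional observation needed for the final sentence.

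First I would establish the main assertion as the contrapositive of the lemma. Suppose the Shilov boundary $S$ for $A$ has an isolated point $x$. The preceding lemma then applies verbatim and gives that $x$ is isolated in the maximal ideal space $\ma$. Since by hypothesis $\ma$ has no isolated points, this is a contradiction; hence $S$ can have no isolated point. No genuine work beyond invoking the lemma is required here.

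For the ``in particular'' clause I would verify that its hypothesis forces the hypothesis of the first part: namely, that a connected $\ma$ with more than one point has no isolated points, so that the first part then applies. Indeed, if $x$ were an isolated point of a connected compact Hausdorff space $\ma$, then $\{x\}$ would be open (by isolation) and also closed (as $\ma$ is Hausdorff), hence clopen; since $\ma$ has more than one point, the complement $\ma\sm\{x\}$ is a nonempty clopen set as well, and the two together exhibit a separation of $\ma$ into nonempty clopen pieces, contradicting connectedness. Thus $\ma$ is free of isolated points and the stated conclusion follows from the first part.

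There is no real obstacle in this argument: the content lives entirely in the lemma, and the corollary is a short logical consequence of it. The only point requiring a line of explanation is the elementary topological fact, used in the last sentence, that a connected space with more than one point admits no isolated points.
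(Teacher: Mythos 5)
Your proof is correct and matches the paper's intent: the paper states this corollary as an immediate consequence of the preceding lemma (by exactly the contrapositive you give), and your additional observation that a connected Hausdorff space with more than one point has no isolated points is the same elementary fact the paper implicitly relies on for the final clause.
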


\begin{corollary}\label{no-idempotents}
Let $A$ be a uniform algebra on a compact space $X$ with more than one point.  Suppose that $A$ has no idempotents other than $0$ and $1$. Then the Shilov boundary for $A$ has no isolated points.
\end{corollary}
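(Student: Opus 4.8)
The plan is to argue by contradiction and reduce immediately to the preceding lemma. Suppose that the Shilov boundary $S$ for $A$ does possess an isolated point, call it $x$. The preceding lemma then supplies an idempotent $g \in A$ with $g(x)=1$ and $\widehat g(y) = 0$ for every $y \in \ma\setminus\{x\}$. The entire proof then amounts to verifying that this $g$ is neither $0$ nor $1$, which contradicts the hypothesis that $A$ has no idempotents other than $0$ and $1$.

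First I would note that $g \neq 0$, since $g(x)=1$ forces $g$ to be nonzero. To see that $g \neq 1$, I would invoke the hypothesis that $X$ has more than one point. Because $A$ separates the points of $X$, the evaluation map embeds $X$ homeomorphically into $\ma$, so $\ma$ itself has more than one point; in particular there is some $y \in \ma$ with $y \neq x$. For this $y$ the lemma gives $\widehat g(y) = 0$, whereas $\widehat 1(y) = 1$, so $g \neq 1$.

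Consequently $g$ is an idempotent in $A$ distinct from both $0$ and $1$, giving the desired contradiction and showing that $S$ can have no isolated point. I do not expect any real obstacle here beyond what the lemma already handles: the one step deserving a word of care is producing a point of $\ma$ distinct from $x$, and this is precisely where the hypothesis that $X$ has more than one point enters, together with the standard fact that $X$ is contained in $\ma$.
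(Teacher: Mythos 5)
Your proof is correct and follows exactly the route the paper intends: the paper states this corollary as an immediate consequence of the preceding lemma, and your argument---applying the lemma to a hypothetical isolated point of the Shilov boundary and using the embedding of $X$ (with more than one point) into $\ma$ to see the resulting idempotent is neither $0$ nor $1$---is precisely that deduction, spelled out.
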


\begin{corollary}\label{antisymmetry-perfect}
Let $A$ be an antisymmetric uniform algebra on a compact space $X$ with more than one point. Then the Shilov boundary for $A$ has no isolated points.
\end{corollary}

We now see how Rudin's theorem on scattered spaces follows from Corollary \ref{antisymmetry-perfect}. We include some useful extra information that comes from this approach.  Recall that a subset of a topological space is said to be {\it perfect\/} if it is closed and has no isolated points.

\begin{corollary}\label{Rudin}
Let $A$ be a nontrivial uniform algebra on a compact space $X$. Then $X$ has a nonempty perfect subset $S$ such that $A|S$ is a nontrivial antisymmetric uniform algebra on $S$.
\end{corollary}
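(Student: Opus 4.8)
The plan is to combine the Bishop-Stone-Weierstrass machinery recalled at the end of Section~\ref{prelim} with the perfectness result of Corollary~\ref{antisymmetry-perfect}. Since $A$ is a nontrivial uniform algebra on $X$, the Bishop-Stone-Weierstrass theorem guarantees the existence of a maximal set of antisymmetry $Y$ for $A$ such that $A|Y$ is nontrivial. As noted in the preliminaries, each maximal set of antisymmetry is closed, and $A|Y$ is an antisymmetric uniform algebra on the compact space $Y$. This gives us a candidate for the perfect set, but $Y$ itself need not be perfect, so the remaining work is to extract a genuinely perfect subset on which $A$ restricts to something still nontrivial and antisymmetric.

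The key observation is that antisymmetry and nontriviality are inherited by the Shilov boundary in the right way. First I would set $B = A|Y$, a nontrivial antisymmetric uniform algebra on $Y$, with $Y$ compact and (since $B$ is nontrivial and hence separates at least two points) containing more than one point. Let $S$ be the Shilov boundary for $B$. By Corollary~\ref{antisymmetry-perfect} applied to $B$ on $Y$, the set $S$ has no isolated points; being a Shilov boundary it is closed in $Y$ and hence compact, so $S$ is perfect. The remaining step is to verify that $A|S = B|S$ is again a nontrivial antisymmetric uniform algebra on $S$.

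For antisymmetry, I would argue that a function in $B$ whose restriction to $S$ is real-valued must in fact be real-valued on all of $Y$: indeed, for $f\in B$, the function $\Im f$ attains its sup-norm on the Shilov boundary $S$, and if $\Im f$ vanishes on $S$ then $\Im f$ vanishes on all of $Y$, whence $f$ is real-valued on $Y$ and therefore constant by antisymmetry of $B$ on $Y$; thus $f|S$ is constant, giving antisymmetry of $B|S$ on $S$. For nontriviality, the point is that the restriction map $B \to B|S$ is an isometric isomorphism of uniform algebras (every $f\in B$ satisfies $\|f\|_Y = \|f\|_S$ because $S$ is the Shilov boundary), so $B|S \cong B$ as uniform algebras; since $B$ is nontrivial, so is $B|S$ on $S$. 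Finally, since $S\subseteq Y\subseteq X$ is closed in $Y$ and $Y$ is closed in $X$, the set $S$ is a closed, hence perfect, subset of $X$, and $A|S = B|S$ has the desired properties.

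The main obstacle I anticipate is the nontriviality claim for $B|S$: one must rule out the possibility that restricting to the smaller set $S$ collapses the algebra onto all of $C(S)$. The isometry $\|f\|_Y = \|f\|_S$ resolves this cleanly, since an isometric algebra isomorphism carries $C(Y)$ onto $C(S)$ and carries the proper closed subalgebra $B$ onto a proper closed subalgebra of $C(S)$; but one should be slightly careful to confirm that $B|S$ is genuinely closed and that the isomorphism $B\cong B|S$ really is onto $C(S)$ precisely when $B = C(Y)$. Everything else is a routine application of the standard facts assembled in the preliminary section.
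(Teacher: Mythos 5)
Your overall route is the same as the paper's: apply the Bishop--Stone--Weierstrass theorem to get a maximal antisymmetry set $Y$ with $A|Y$ nontrivial, pass to the Shilov boundary $S$ of $B=A|Y$, and invoke Corollary~\ref{antisymmetry-perfect} for perfectness. Your antisymmetry argument for $B|S$ is also fine: the fact that $\Im f$ attains its extrema over $Y$ on $S$ is the standard consequence of applying the Shilov maximum principle to $e^{\mp if}\in B$, whose moduli are $e^{\pm\Im f}$.

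The genuine gap is the nontriviality step, and you half-acknowledge it yourself in your final paragraph without closing it. The restriction map $B\to B|S$ is indeed an isometric isomorphism, but this does not by itself transfer properness, because the two algebras sit inside different ambient algebras: the restriction map $C(Y)\to C(S)$ is surjective (Tietze) but \emph{not} injective (the Shilov boundary of $C(Y)$ is $Y$, not $S$), so there is no isometric isomorphism of $C(Y)$ onto $C(S)$ carrying $B$ to $B|S$. A priori a proper closed subalgebra of $C(Y)$ could restrict onto all of $C(S)$, and ruling this out requires an actual argument (e.g., showing that $B|S=C(S)$ forces every point of $Y$ to agree with a point of $S$ on all of $B$, whence $Y=S$ by point separation). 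Fortunately you do not need any of this: you have already proved that $B|S$ is antisymmetric, and $S$ is nonempty and perfect, hence has more than one point, so Urysohn's lemma gives a nonconstant real-valued function in $C(S)$; such a function cannot lie in the antisymmetric algebra $B|S$, so $B|S\neq C(S)$. Replacing your isometry argument by this one line makes the proof complete, and is presumably the justification behind the paper's unelaborated closing assertion.
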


\bpf
By the Bishop-Stone-Weierstrauss theorem, there is a maximal set of antisymmetry $Y$ such that the uniform algebra $A|Y$ is nontrivial. Clearly $Y$ has more than one point. Let $S$ be the Shilov boundary for $A|Y$. By Corollary \ref{antisymmetry-perfect}, since $A|Y$ is antisymmetric, $S$ is a nonempty perfect subset of $X$. Moreover, $A|S$ is a nontrivial antisymmetric uniform algebra on $S$.
\epf


\section{The main construction}\label{section-main-construction}

In this section we prove Theorems~\ref{main-theorem} and \ref{main-construction}.

We will need the following elementary lemma.

\blem\label{countable-dense}
Let $X$ be a metric space, let $Y$ be a separable subspace of $X$, and let $Z$ be a dense subspace of $X$.  Then there is a countable subset of $Z$ whose closure in $X$ contains $Y$.
\elem

\bpf
Choose a countable dense subset $S$ of $Y$.  For each $s\in S$ and each $n\in \N$ choose a point $z_{s,n}$ of $Z$ such that $d(s,z_{s,n})< 1/n$ (where $d$ denotes the metric on $X$).  The set $E=\{z_{s,n}: s\in S {\rm\ and\ } n\in \N\}$ is countable and $Y$ is contained in the closure of $E$ in $X$.
\epf

The following lemma is standard.  See for instance \cite[Chapter~II, Section~4]{Hurewicz-Wallman1948} or
\cite[Theorems~1.4.5 and~1.6.5]{Engelking}.

\blem\label{0-d}
Let $X$ be a compact totally disconnected space.  Then for every pair of disjoint closed subsets $\pair K$ of $X$ there is a clopen subset $K$ of $X$ such that $K_1\subset K$ and $K_2\subset X\sm K$.
\elem

The next lemma will be useful also.

\blem\label{countable-pairs}
Let $X$ be a compact metrizable space.  Then there is a countable collection $\sJ$ of pairs $\pair K$ of disjoint closed subsets of $X$ such that for every pair $\pair C$ of disjoint closed subsets of $X$ there is a pair $\pair K\in {\sJ}$ such that $C_1\subset K_1$ and $C_2\subset K_2$.
\elem

\bpf
As a compact metrizable space, $X$ has a countable basis $\sB$.  Let
$${\sC}=\{C: C {\rm\ is\ a\ finite\ union\ of\ closures\ of\ members\ of\ {\sB}}\}.$$
Let ${\sJ}= \{\pair K\in {\sC} \times {\sC} : K_1 {\rm\ and\ } K_2 {\rm\ are\ disjoint}\}$.  The reader can easily verify that $\sJ$ has the required properties.
\epf

\bpf[Proof of Theorem~\ref{main-construction}]
There are three cases to consider: (i) $|A|$ is normal on $X$, (ii) $A$ is logmodular on $X$, (iii) $|A|$ is normal on $X$ and $A$ is logmodular on $X$.  We treat case~(ii) first and then discuss cases (i) and (iii) which follow exactly the same outline.

We will define an increasing sequence $A_0\subset A_1 \subset A_2 \subset \cdots$ of closed unital subalgebras of $A$.  The algebra $B$ will be the closure of
their union in $A$.  We will adopt the following notation.
The canonical quotient map $X\ra \xaj j$ will be denoted by $\pi_j$; the canonical quotient map $X\ra X_B$ will be denoted by $\pi_\infty$.
Given a function $f$ on $X$ that is constant on the fibers of $\pi_j$, and hence induces a function on $\xaj j$, the induced function on $\xaj j$ will be denoted by $f_j$; thus $f_j \circ \pi_j=f$.  For a function $f$ on $X$ that induces a function on $X_B$, the induced function on $X_B$ will be denoted by $f_\infty$. Given a point $p$ in $X$, its equivalence class in $\xaj j$ will be denoted by $p_j$, and its equivalence class in $X_B$ will be denoted by $p_\infty$.
Every function on $\xaj j$ (or $X_B$) gives rise to a function on $X$ by precomposing with $\pi_j$ (or $\pi_\infty$).  Given a set $\sC$ of functions\vadjust{\kern2pt} on $\xaj j$ or $X_B$, we will denote the set of corresponding functions on $X$ by $^X{\sC}$.
Given a set of functions $\sF$ in $A$, we will denote by $[\sF]$ the closed unital subalgebra of $A$ generated by $\sF$.  Recall that we denote by $\inv A$ the set of invertible elements of $A$, and given a set $\sG$ of invertible elements, we denote by $\sG^{-1}$ the set of inverses of the elements of $\sG$.

Choose a function $a\in A$ such that $\ol a\notin A$, and set $A_0=[\{a\}]$.

We will inductively choose two sequences $(\sG_j)_{j=0}^\infty$ and $(\sH_j)_{j=0}^\infty$ of countable subsets of $\inv A$ such that setting
$$\textstyle A_j=\Bigl[\{a\} \cup \bigl(\bigcup_{k=0}^{j-1} \sG_k\bigr) \cup \bigl(\bigcup_{k=0}^{j-1} \sG_k^{-1}\bigr)  \cup \bigl(\bigcup_{k=0}^{j-1} \sH_k\bigr)\Bigr]$$
for $j=1, 2, \ldots$, the following three conditions are satisfied for all $j=0, 1, 2, \ldots$.
\begin{enumerate}
\item[(1)] The closure of the set $\{\log |g|: g\in{\sG_j}\}$ in $\crx X$ contains $^X\crx {\xaj j}$.
\item[(2)] For every function $h\in \sH_j$, the image of the function $|h|$ is contained in $[0, 3/2) \cup (2, \infty)$. 
\item[(3)] For every pair $\pair C$ of disjoint closed subsets of $X_{A_j}$ there is a function $h\in \sH_j$ such that $|h|<3/2$ on $\pi_j^{-1}(C_1)$ and $|h|>2$ on $\pi_j^{-1}(C_2)$.
\end{enumerate}
Let $n$ be a nonnegative integer, and
assume for the purpose of induction that we have chosen $\sG_j$ and $\sH_j$ for all $0\leq j<n$.  (When $n=0$, this assumption is vacuous.)
Note that $A_n$ is separable since it is generated by a countable set of functions.  Therefore, $\xaj n$ is (compact) metrizable, and $^X\crx {\xaj n}$ is separable.
Since $A$ is logmodular, Lemma~\ref{countable-dense} yields a countable set of functions ${\sG}_n$ in $\inv A$ such that the closure of the set $\{\log|g| : g\in {\sG}_n\}$ in $\crx X$ contains $^X\crx {\xaj n}$.
Let $\sJ$ be the countable collection of pairs of disjoint closed subsets of $\xaj n$ given by applying Lemma~\ref{countable-pairs}.  Applying Lemma~\ref{0-d} then yields the existence of a countable collection $\sK$ of clopen subsets of $X$ such that for every pair $\pair K\in \sJ$, there exists $K\in \sK$ such that $\pi_n^{-1}(K_1)\subset K$ and $\pi_n^{-1}(K_2)\subset X\sm K$.  Given $K\in\sK$, the function that is 0 on $K$ and 1 on $X\sm K$  is continuous; since $A$ is logmodular, it follows that there is a function $h_K\in \inv A$ such that $|h_K|< 3/2$ on $K$ and $|h_K|>2$ on $X\sm K$.  We conclude that there is a countable set ${\sH}_n$ of functions in $\inv A$ such that for every function $h\in \sH_n$ 
the image of the function $|h|$ is contained in $[0,3/2) \cup (2,\infty)$ and such that
for every pair $\pair C$ of disjoint closed subsets of $\xaj n$ there is a function $h\in \sH_n$ such that $|h|<3/2$ on $\pi_n^{-1}(C_1)$ and $|h|>2$ on $\pi_n^{-1}(C_2)$.  Thus the desired sequences $(\sG_j)_{j=0}^\infty$ and
$(\sH_j)_{j=0}^\infty$ can be inductively chosen.

Set $$\textstyle B=\Bigl[\{a\} \cup \bigl(\bigcup_{k=0}^\infty \sG_k\bigr) \cup \bigl(\bigcup_{k=0}^\infty \sG_k^{-1}\bigr) \cup \bigl(\bigcup_{k=0}^\infty \sH_k\bigr)\Bigr].$$
Note that $B$ is separable since it is generated by a countable set of functions.

To show that $X_B$ is totally disconnected consider two points $p$ and $q$ of $X$ that represent distinct equivalence classes $p_\infty$ and $q_\infty$ in $X_B$.  Then some function in
$\{a\} \cup \bigl(\bigcup_{k=0}^\infty \sG_k\bigr) \cup \bigl(\bigcup_{k=0}^\infty \sG_k^{-1}\bigr) \cup \bigl(\bigcup_{k=0}^\infty \sH_k\bigr)$ must separate $p$ from $q$, and hence there must be some finite $j$ such that $p_j\neq q_j$.  Then there is a function $h\in \sH_j$ such that $|h(p)|<3/2$ and $|h(q)|>2$.  It follows that $p_\infty$ and $q_\infty$ lie in different components of $X_B$, so $X_B$ is totally disconnected.  

Because the\vadjust{\kern3pt} algebras $^X\crx {\xaj 1}\subset {}^X\crx {\xaj 2}\subset \cdots$ form an increasing sequence, their union\vadjust{\kern3pt}
$\bigcup_{j=0}^\infty {}^X\crx {\xaj j}$ is an algebra.  The (unital) algebra of functions induced by
$\bigcup_{j=0}^\infty {}^X\crx {\xaj j}$ on $X_B$ separates points on $X_B$ and hence by the Stone-Weierstrass theorem is dense in $\crx {X_B}$.  Since the closure of the set
$\{\log|g| : g\in {\sG}_j\}$ contains $^X\crx {\xaj j}$, and every member of $\sG_j$ is invertible in $B$, it follows that $\{\log|g_\infty| : g\in \inv B\}$ is dense in $\crx {X_B}$.  Thus $\tB$ is logmodular.

Finally, note that $\tB$ is nontrivial because the function $a_\infty$ is in $\tB$ while the function $\ol a_\infty$ is not. 
This completes the proof of case~(ii).

The proofs of cases (i) and (iii) follow exactly the same pattern.  Consider case~(i).  We again begin by choosing a function $a\in A$ such that $\ol a\notin A$, and set $A_0=[\{a\}]$.
We then inductively choose a single sequence $(\sF_j)_{j=0}^\infty$ of countable subsets of $A$ such that setting
$$\textstyle A_j=\Bigl[\{a\} \cup \bigl(\bigcup_{k=0}^{j-1} \sF_k\bigr)\Bigr]$$
for $j=1, 2, \ldots$, we have all $j=0, 1, 2, \ldots$ that
\begin{enumerate}
\item[(4)] For every function $f\in \sF_j$, the image of the function $|f|$ is contained in 
$\{0,1\}$. 
\item[(5)] For every pair $\pair C$ of disjoint closed subsets of $X_{A_j}$ there is a function $f\in \sF_j$ such that $f=0$ on $\pi_j^{-1}(C_1)$ and $|f|=1$ on $\pi_j^{-1}(C_2)$.
\end{enumerate}
Assume for the purpose of induction that we have chosen $\sF_j$ for all $0\leq j<n$.
Let $\sJ$ and $\sK$ be obtained in the same manner as in the proof of case~(ii).  Since $|A|$ is normal on $X$, given $K\in\sK$, there is a function $f_K\in A$ such that $f_K=0$ on $K$ and $|f_K|=1$ on $X\sm K$.
Consequently, there is a countable set $\sF_n$ of functions in $A$ such that for every function $f\in \sF_n$ the image of the function $|f|$ is contained in $\{0,1\}$ and such that for every pair
$\pair C$ of disjoint closed subsets of $\xaj n$ there is a function $f\in \sF_n$ such that $f=0$ on $\pi_n^{-1}(C_1)$ and $|f|=1$ on $\pi_n^{-1}(C_2)$.
Thus the desired sequence $(\sF_j)_{j=0}^\infty$ can be inductively chosen.

Set $$\textstyle B=\Bigl[\{a\} \cup \bigl(\bigcup_{k=0}^\infty \sF_k\bigr)\Bigr].$$

Consider an arbitrary pair $\pair L$ of disjoint closed subsets in $X_B$.  For each pair $(p,q)
\in \pi_\infty^{-1}(L_1)\times \pi_\infty^{-1}(L_2)$ there is a function in $\{a\} \cup \bigl(\bigcup_{k=0}^\infty \sF_k\bigr)$ that separates $p$ and $q$.  Consequently, a compactness argument shows that the same holds with $\{a\} \cup \bigl(\bigcup_{k=0}^\infty \sF_k\bigr)$ replaced by some finite subset of itself.  Thus there is some finite $j$ such that the closed sets $\pi_j\bigl(\pi_\infty^{-1}(L_1)\bigr)$ and $\pi_j\bigl(\pi_\infty^{-1}(L_2)\bigr)$ of $\xaj j$ are disjoint.  Then by our choice of $\sF_j$, there is a function $f\in \sF_j$ such that $f=0$ on $\pi_\infty^{-1}(L_1)$ and $|f|=1$ on $\pi_\infty^{-1}(L_2)$.  The function $f_\infty$ is in $\tB$ and satisfies $f_\infty=0$ on $L_1$ and $|f_\infty|=1$ on $L_2$.
Thus $|\tB|$ is normal on $X_B$.  Since $|f_\infty|$ takes the values 0 and 1 only, this argument also shows that $X_B$ is totally disconnected.  As before $\tB$ is nontrivial because $a_\infty$ is in $\tB$ while $\ol a_\infty$ is not. 

For the proof of case~(iii), after setting $A_0=[\{a\}]$, we inductively choose two sequences $(\sG_j)_{j=0}^\infty$ and $(\sF_j)_{j=0}^\infty$ as above such that conditions (1), (4), and (5) above are satisfied for all $j=0, 1, 2,\ldots$ with
$$\textstyle A_j=\Bigl[\{a\} \cup \bigl(\bigcup_{k=0}^{j-1} \sG_k\bigr) \cup \bigl(\bigcup_{k=0}^{j-1} \sG_k^{-1}\bigr) \cup \bigl(\bigcup_{k=0}^{j-1} \sF_k\bigr)\Bigr]$$
for $j=1, 2, \ldots$.  (The sequence $(\sH_j)_{j=0}^\infty$ is not needed in case~(iii).)
We then set
$$\textstyle B=\Bigl[\{a\} \cup \bigl(\bigcup_{k=0}^\infty \sG_k\bigr) \cup \bigl(\bigcup_{k=0}^\infty \sG_k^{-1}\bigr) \cup \bigl(\bigcup_{k=0}^\infty \sF_k\bigr)\Bigr].$$
The proof that $\tB$ is nontrivial and logmodular on $X_B$, that $|\tB|$ is normal on $X_B$, and that $X_B$ is totally disconnected is a repetition of arguments above.
\epf

Note that in all three cases of Theorem~\ref{main-construction}, $X_B$ is the Shilov boundary for $\tB$. If $A$ has no nontrivial idempotents, then neither has $\tB$, and so $X_B$ has no isolated points by Corollary~\ref{no-idempotents}.

\bpf[Proof of Theorem~\ref{main-theorem}]
As usual we denote by $H^\infty(D)$ the algebra of all bounded holomorphic functions on the open unit disc $D$.
Let $X$ be the intersection of the fiber of the maximal ideal space of $H^\infty(D)$ over the point $1$ with the Shilov boundary $S$ for $H^\infty(D)$.  
It is well known that $S$, and hence $X$, is totally disconnected \cite[pp.~169-175]{Hoffman}.
Let $A$ be the algebra obtained by restricting the Gelfand transforms of the functions in $H^\infty(D)$ to $X$.  Then 
as shown in \cite[pp.~189--190]{Hoffman} (see also \cite{Hoffman-Singer}),
$A$ is a uniform algebra on $X$ that has no nontrivial idempotents, $X$ is the Shilov boundary for $A$, and $|A|$ is normal on $X$.  (The algebra $A$ is the Hoffman-Singer fiber algebra $A_\alpha$ with $\alpha=1$.)  It is well known that $H^\infty(D)$ is a logmodular algebra \cite[p.~182]{Hoffman}, and it follows that $A$ is logmodular on $X$. Let $\tB$ be the uniform algebra on $X_B$ obtained by applying Theorem~\ref{main-construction} to $A$. Then $X_B$ is compact, totally disconnected, and metrizable. Since $X_B$ is the Shilov boundary for $\tB$, as noted above, by Corollary~\ref{no-idempotents}, $X_B$ has no isolated points. Thus $X_B$ is a Cantor set, and $\tB$ is a uniform algebra with all the required properties.
\epf

It is standard that there is a function in $H^\infty(D)$ whose real part extends continuously to the boundary of $D$ but whose imaginary part does not~\cite[p.~377]{Garnett}.  It follows that the Hoffman-Singer fiber algebra $A_1$ is not antisymmetric.  Consequently, the algebra constructed in the proof of Theorem~\ref{main-theorem} can also fail to be antisymmetric (if, for instance, one chooses the 
sequence $(\sG_j)_{j=0}^\infty$ in the proof of Theorem~\ref{main-construction} to include a nonconstant real-valued function).  However, by applying Corollary~\ref{Rudin}, we can obtain an antisymmetric uniform algebra with the properties in Theorem~\ref{main-theorem}.  Note also that invoking Corollary~\ref{Rudin} eliminates the need to prove that the space $X_B$ in the proof above has no isolated points. 


\section{Uniform algebras logmodular and regular on general spaces}\label{general-spaces}

In this section we first prove Theorem \ref{examples-arbitrary-spaces} and Corollary~\ref{unit-interval}. We then prove, under certain conditions, some results concerning properties preserved by the construction introduced in \cite{FI}. Finally we combine our results to prove Theorem~\ref{essential-arbitrary-spaces}.

A compact space $X$ is said to be {\em simply coconnected\/} if the first \v Cech cohomology group $\check H^1(X;\Z)$ vanishes.  It is well known that simple coconnectivity of $X$ is equivalent to the statement that every zero-free continuous complex-valued function on $X$ has a continuous logarithm.  (See for instance \cite[Theorem~15.4]{Alexander-Wermer}.)  Simple coconnectivity will play a prominent role in the proofs in this section.  It is standard that every totally disconnected compact space, and in particular every closed subset of a Cantor set, is simply coconnected.  Also it is easily shown that every compact subset of $\R$ is simply coconnected.  Although the unit circle $\T$ is not simply coconnected, every \emph{proper} closed subset of $\T$ is simply coconnected.

\bpf[Proof of Theorem~\ref{examples-arbitrary-spaces}]
Let $K$ be a Cantor set contained in $X$.  By Theorem~\ref{main-theorem}, there is a nontrivial uniform algebra $B$ such that $B$ is logmodular on $K$ and $|B|$ is normal on $K$.  Define $A$ by
$$A=\{f\in C(X): f|K\in B\}.$$
Then $A$ is a nontrivial uniform algebra on $X$.

We first show that $|A|$ is normal on $X$.  Consider disjoint closed subsets $K_0$ and $K_1$ in $X$.  By the normality of $|B|$, there is a function $b\in B$ such that $b=0$ on $K_0\cap K$ and $|b|=1$ on $K_1\cap K$.  As a closed subset of a Cantor set, $K_1\cap K$ is simply coconnected.  Therefore, there is a continuous real-valued function $\sigma: K_1\cap K\ra\R$ such that $b=e^{i\sigma}$ on $K_1\cap K$.  By the Tietze extension theorem, $\sigma$ extends to a continuous function $\tsigma:K_1\ra\R$.
There is a well-defined continuous function $a:K_0\cup K_1\cup K\ra \C$ given $$
a(x) =
\begin{cases} 0 &\mbox{for\ } x\in K_0\\
e^{i\tsigma(x)} & \mbox{for\ } x\in K_1\\
b(x) & \mbox{for\ } x\in K.
\end{cases}
$$
Apply the Tietze extension theorem again to extend this function to a continuous function $\ta$ on $X$.  Then $\ta=0$ on $K_0$ and $|\ta|=1$ on $K_1$; and $\ta$ is in $A$ since $\ta|K=b$.  Thus $|A|$ is normal on $X$.

We now show that $A$ is logmodular on $X$.
Let $u\in \crx X$ be arbitrary and fix $\vep>0$.  Since $B$ is logmodular on $K$, there is a function $f\in \inv B$ such that $\bigr\|\,u|K - \log |f|\, \bigl\|_K < \vep$.  Since $K$ is simply coconnected, there is a function $g\in C(K)$ such that $f=e^g$.  (Note that it is not claimed that $g$ is in $B$.)  Then $\Re g = \log |f|$, so $\| u|K -\Re g \|_K < \vep$.  Set 
$\alpha= u|K - \Re g$, and apply the Tietze extension theorem to obtain a continuous real-valued function $\talpha$ on $X$ satisfying $\|\talpha\|_X=\|\alpha\|_K<\vep$.  Set $w=u-\talpha$.  Then $w$ is a continuous real-valued function on $X$ and $w|K=u|K - \alpha =\Re g$.  Extend $\Im g$ to a real-valued continuous function $v$ on $X$.  Then set $h= w+i v$.  Then on $K$ we have $e^h=e^{w+iv} = e^g=f$.  Thus $e^h$ is in $A$.  Furthermore, on $K$ we also have $e^{-h}=f^{-1}\in B$.  Thus $e^{-h}$ is in $A$, so $e^h$ is invertible in $A$.  Finally note that
$$\|\, u - \log|e^h| \, \|_X = \|u -\log e^w\|_X = \|u-w\|_X = \|\talpha\|_X < \vep.$$

The uniform algebra $A$ is not normal since its restriction to the Cantor set $K$ is the nontrivial uniform algebra $B$.
\epf

\bpf[Proof of Corollary~\ref{unit-interval}]
This is immediate from Theorem~\ref{examples-arbitrary-spaces}, since the closed unit interval contains the Cantor set $\JCantor$.
\epf

To prove Theorem~\ref{essential-arbitrary-spaces} we will use the general method for constructing essential uniform algebras given in \cite{FI}.  We first recall the relevant parts of the main theorem of \cite{FI}.

\bthm\label{general-method}
Let $A$ be a nontrivial uniform algebra on a compact space $K$.  Let $X$ be a compact metric space every
nonempty open subset of which contains a nowhere dense subspace homeomorphic to $K$.
Then there exists a sequence $\{K_n\}_{n=1}^\infty$ of pairwise disjoint, nowhere dense subspaces of $X$ each homeomorphic to $K$ such that $\bigcup_{n=1}^\infty K_n$ is dense in $X$ and ${\rm diam} (K_n)\rightarrow 0$.  If  homeomorphisms $h_n:K_n\rightarrow K$ are chosen and we set $A_n=\{f\circ h_n:f\in A\}$, then the set of functions $A_X=\{f\in C(X): f|K_n\in A_n \hbox{\ for\ all\ $n$}\}$ is an essential uniform algebra on $X$.
The equality $A_X|K_n=A_n$ holds for all $n$.
Furthermore, $A_X$ is regular if and only if $A$ is regular.
\ethm

Several additional relations between the properties of $A$ and the properties of $A_X$ are proven in \cite{FI} but need not concern us here.  We will, however, need two additional relations not considered in \cite{FI}.

\bthm\label{essential-pseudo-normal}
In the context of Theorem~\ref{general-method}, if $|A_X|$ is normal on $X$ then $|A|$ is normal on $K$. The converse holds provided that every proper closed subset of $K$ is simply coconnected.
\ethm

\bpf
The proof that 
$|A|$ is normal on $K$ whenever $|A_X|$ is normal on $X$ is essentially trivial and hence left to the reader.

We now suppose that every proper closed subset of $K$ is simply coconnected and that $|A|$ is normal on $K$.
Let $E$ be a proper closed subset of $X$, and let $x \in X\setminus E$. We will show that there exists $f\in A_X$ such that $|f|=1$ on $E$ and $f=0$ on a neighborhood of $x$ in $X$, and hence $|A_X|$ is normal on $X$, by Lemma~\ref{equivalent-|A|}. We consider two cases.

First suppose that $x$ lies in none of the sets $K_n$ that intersect $E$.  (The point $x$ may or may not lie in some $K_n$ that is disjoint from $E$.)  
In this case, let $Z$ denote the quotient space obtained by identifying each $K_n$ for $n\in \N$ to a point, and let $p:X\ra Z$ be the canonical quotient map.  The space $Z$ is compact and metrizable~\cite[Lemma~2.6]{FI}. Then $p(x) \notin p(E)$, so Urysohn's lemma provides a function $f^* \in \crx Z$ such that $f^*=1$ on $p(E)$ and $f^*=0$ on a neighborhood of $p(x)$. The function $f=f^*\circ p$ is in $\crx X$ and $f$ is constant on each $K_n$, so $f\in A_X$. Clearly $f$ has the required properties.

Now suppose instead that $x$ lies in $K_{n_0}$ for some $n_0 \in \N$ such that $K_{n_0}$ intersects $E$.
Since $|A_{n_0}|$ is normal on $K_{n_0}$, there exists $g \in A_{n_0}$ such that $|g|=1$ on $E\cap K_{n_0}$ and $g=0$ on a relatively open neighborhood $U$ of $x$ in $K_{n_0}$. Since $E \cap K_{n_0}$ is simply coconnected, there is a continuous real-valued function $\sigma:E \cap K_{n_0}\to \R$ such that $g|E \cap K_{n_0} = e^{i\sigma}$.

Now let $Z$ denote the quotient space (again compact and metrizable) obtained by identifying the set $K_n$ to a point for each $n\in \N\setminus\{n_0\}$, and let $p:X\ra Z$ be the canonical quotient map.  Let $g^*:p(K_{n_0})\to \C$ be the function in $C(p(K_{n_0}))$ such that $g^*\circ (p|K_{n_0}) = g$, and let $\sigma^*$ be the function in $\crx {p(E \cap K_{n_0})}$ such that $\sigma^* \circ (p|E \cap K_{n_0}) = \sigma$. Then $g^*|p(E \cap K_{n_0})= e^{i \sigma^*}$. By the Tietze extension theorem we can extend $\sigma^*$ to a function $\tss$ in $\crx{p(E)}$.

We have that $p(x) \notin p(E)$. Choose a closed neighborhood $L^*$ of $p(x)$ in $Z$
such that $L^*$ and $p(E)$ are disjoint and $L^*\cap p(K_{n_0}) \subset p(U)$. 
There is a well-defined continuous function $f^*:p(E)\cup p(K_{n_0}) \cup L^*\ra \C$ given by
$$
f^*(x) =
\begin{cases}
0 & \mbox{for\ } x\in L^*\\
e^{i\tss(x)} &\mbox{for\ } x\in p(E)\\
g^*(x)& \mbox{for\ } x\in p(K_{n_0}).
\end{cases}
$$
By the Tietze extension theorem, we can extend $f^*$ to a function $\tfs$ in $C(Z)$. The function $f=\tfs\circ p$ is in $C(X)$, $f|K_{n_0}=g \in A_{n_0}$, and $f|K_n$ is constant for all $n \in \N\setminus\{n_0\}$. Thus $f\in A_X$, and $f$ has the desired properties.
\epf

\bthm\label{essential-logmodular}
In the context of Theorem~\ref{general-method}, if $A_X$ is logmodular on $X$ then $A$ is logmodular on $K$.  The converse holds provided $K$ is simply coconnected.
\ethm

The proof of this theorem will use the following elementary lemma whose proof is given in \cite[Lemma~6.3]{Izzo}.

\blem\label{S-W}
In the context of Theorem~\ref{general-method}, the set
$$G=\{ f\in \crx X : f|K_n {\rm \ is\  constant\ for\ all\ but\ finitely\ many\ } n\}$$
is dense in $\crx X$.
\elem

\bpf[Proof of Theorem~\ref{essential-logmodular}]
The proof is similar to the proof that $A_X$ is Dirichlet on $X$ if and only if $A$ is Dirichlet on $K$ given in \cite[Theorem~6.2]{Izzo}.

That $A$ is logmodular on $K$ whenever $A_X$ is logmodular on $X$ is essentially trivial and hence left to the reader.

We now suppose that $K$ is simply coconnected and that $A$ is logmodular on $K$ and will prove that $A_X$ is logmodular on $X$.  Let $u$ in $\crx X$ and $\vep>0$ be arbitrary.  By Lemma~\ref{S-W}, there is a positive integer $N$ and a function $w$ in $\crx X$ such that $w|K_n$ is constant for all $n>N$ and $\|u-w\|_X< \vep/2$.  Since $A$ is logmodular on $K$, there is, for each $n=1,\ldots, N$, a function $\alpha_n$ in $\crx {K_n}$ with $\|\alpha_n\|_{K_n}<\vep/2$ such that $w|K_n - \alpha_n=\log |a_n|$ for some $a_n$ in $\inv A_n$.  Since $K$ is simply coconnected, there is a function $b_n$ in $C(K_n)$ such that $a_n=e^{b_n}$.

Let $Z$ denote the quotient space obtained by identifying each $K_n$ for $n>N$ to a point, and let $p:X\ra Z$ be the canonical quotient map.  The compact space $Z$ is metrizable \cite[Lemma~2.6]{FI}.

Let $w^*$ be the function in $\crx Z$ such that $w=w^*\circ p$.  For each $n=1,\ldots, N$, let $b_n^*$ and $\alpha_n^*$ be the functions in $C\bigl(p(K_n)\bigr)$ and $C_\R\bigl(p(K_n)\bigr)$, respectively, such that $b_n=b_n^*\circ(p|K_n)$ and $\alpha_n=\alpha_n^*\circ (p|K_n)$.  By the Tietze extension theorem, there is a function $\alpha^*$ in $\crx Z$ with $\|\alpha^*\|_Z<\vep/2$ such that $\alpha^*|\bigl(p(K_n)\bigr)=\alpha_n^*$ for every $n=1,\ldots, N$.  The Tietze extension theorem also yields a function $s$ in $\crx Z$ such that $s|\bigl(p(K_n)\bigr) = \Im b_n^*$ for every $n=1,\ldots, N$. 

Set $h=\exp\bigl[ (w^* - \alpha^*) + is\bigr] \circ p$.  On each $K_n$, for $n=1,\ldots, N$, the function $h$ coincides with $a_n$.  On each $K_n$, for $n>N$, of course $h$ is constant.  Thus $h$ belongs to $A_X$.  Furthermore,
\begin{align*}
\bigl\| u - \log |h| \, \bigr\|_X
&\leq \| u - w \|_X + \bigl\| w - \log |h| \, \bigr\|_X \\
&=\| u - w \|_X + \bigl\|w - \bigl((w^*\circ p) - \alpha^*\circ p\bigr) \bigr\|_X \\
&= \|u - w\|_X + \|\alpha^*\|_Z  < \vep.
\end{align*}
\epf

Before finally proving Theorem~\ref{essential-arbitrary-spaces}, we present two elementary lemmas.

\blem\label{claim-4}
Every Cantor set $K$ contains a Cantor set $C$ that is nowhere dense in $K$.
\elem

\bpf
Without loss of generality $K=\{0,1\}^\omega$.  The set $C=\{(x_n)\in \{0,1\}^\omega : x_n=0 {\rm\ for\ all\ } n {\rm\ odd}\}$ has the required properties.
\epf

\blem\label{claim-5}
Let $X$ be a compact metrizable space without isolated points.  Then every nonempty open subset of $X$ contains a nowhere dense Cantor set.
\elem

\bpf
Let $U$ be a nonempty open subset of $X$.  Then $U$ contains a nonempty open set $V$ with $\ol V\subset U$. Then $\ol V$ is a nonempty perfect compact subset of the metrizable space $X$ and hence contains a Cantor set.  The desired nowhere dense Cantor set is now obtained by applying the preceding lemma.
\epf

\bpf[Proof of Theorem~\ref{essential-arbitrary-spaces}]
By Lemma~\ref{claim-5}, every open subset of $X$ contains a nowhere dense Cantor set.  Therefore,
we can apply Theorem~\ref{general-method} to the uniform algebra on the Cantor set given by Theorem~\ref{main-theorem} to obtain a uniform algebra on $X$ that has the desired properties by Theorems~\ref{essential-pseudo-normal} and~\ref{essential-logmodular}.
\epf


\section{Dense Squares on the Cantor set}\label{dense-squares}

In this section we prove Theorem~\ref{optional-theorem} by applying Cole's method of root extensions to the uniform algebra of Theorem~\ref{main-theorem}.  First we recall some standard results related to the Cole construction.  Then we show that, under suitable conditions, some of the properties we are interested in are preserved by this construction. Finally we use these results to prove Theorem~\ref{optional-theorem}.

Systems of Cole extensions have been discussed in detail in many of our earlier papers.
The following proposition includes the facts that we will need.
\begin{proposition}
\label{metric-Cole}
Let $A$ be uniform algebra on a compact metrizable space $X$. Then there exist a uniform algebra $\jt{A}$ on a compact metrizable space $\jt{X}$, a continuous surjective map $\pi:\jt{X} \to X$, and a norm one continuous linear operator $T:C(\jt{X}) \to C(X)$ with the following properties:
\begin{enumerate}
\item[(a)] for all $f\in C(X)$, $T(f\circ \pi)=f$;
\item[(b)] $\{f\circ \pi: f \in A\} \subset \jt{A}$ and $T(\jt{A})=A$;
\item[(c)] $\jt{A}$ is nontrivial if and only if $A$ is nontrivial;
\item[(d)] $\jt{A}$ is regular on $\jt{X}$ if and only if $A$ is regular on $X$;
\item[(e)] $\jt{A}$ is natural on $\jt{X}$ if and only if $A$ is natural on $X$;
\item[(f)] $\{f^2:f \in \jt{A}\}$ is a dense subset of $\jt{A}$;
\item[(g)] for each $x\in X$, the fiber $\pi^{-1}(x)$ is totally disconnected, and $\jt{A}|\pi^{-1}(x)$ is dense in $C(\pi^{-1}(x)).$
\item[(h)] for each $x\in X$, the fiber $\pi^{-1}(x)$ is a Cantor set.
\end{enumerate}
\end{proposition}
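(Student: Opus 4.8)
The plan is to recall Cole's single square-root extension, iterate it countably many times so as to remain in the metrizable category, and then read off (a)--(h), citing the classical (and, for naturality, genuinely deep) facts about such systems. First I would describe one step: given a uniform algebra $A$ on a compact space $Y$ and a set $F\subseteq A$, form
$$ Y_F=\Bigl\{(y,(\zeta_f)_{f\in F})\in Y\times\prod_{f\in F}\{\,w\in\C:|w|^{2}\le\|f\|_Y\,\} : \zeta_f^{\,2}=f(y)\ \text{for all}\ f\in F\Bigr\}, $$
which is compact, with projection $\rho:Y_F\to Y$; let $A_F$ be the closed subalgebra of $C(Y_F)$ generated by $\{f\circ\rho:f\in A\}$ together with the coordinate functions $\zeta_f$. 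Each fiber of $\rho$ carries a canonical probability measure (the product over $f$ of the normalized counting measure on the two roots $\pm\sqrt{f(y)}$, a point mass when $f(y)=0$), and integration against these measures defines a positive norm-one linear operator $T_F:C(Y_F)\to C(Y)$ with $T_F(f\circ\rho)=f$; since averaging annihilates every monomial in which some $\zeta_f$ occurs to an odd power, $T_F(A_F)=A$.

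Next I would iterate. As $X$ is compact metrizable, $C(X)$ is separable, so $A$ is separable; at each stage $n$ I would adjoin square roots only of a countable dense subset $D_n$ of the separable stage-$n$ algebra $A^n$, and I would always include one fixed nowhere-vanishing function, say the constant $1$, in $D_n$. Passing to the inverse limit of the spaces and the closure of the union of the algebras produces $\jt X$ and $\jt A$, the continuous surjection $\pi:\jt X\to X$, and (as the inverse limit of the $T_F$, i.e.\ integration against the limiting fiber measures) the operator $T$. Because only countably many roots are adjoined in total, $\jt X$ is a closed subspace of a countable product of closed discs over $X$, hence compact metrizable; this secures the metric setting asserted in the proposition.

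With the construction in hand the listed items follow. Properties (a) and (b) are the first paragraph read in the limit: $f\circ\pi$ is constant on the fibers of $\pi$, so $T(f\circ\pi)=f$ and $T(\jt A)=A$. For (c), surjectivity of $T$ transfers (non)triviality, since $\jt A=C(\jt X)$ would force $A=T(\jt A)=T(C(\jt X))=C(X)$; the converse uses (g) together with a Stone--Weierstrass localization, as in the standard treatment. Properties (d) and (e), the preservation of regularity and of naturality, are the classical theorems of Cole and I would simply cite them. For (f), every element of the dense union $\bigcup_n A^n$ lies within $\vep$ of a square: if $g\in A^n$ and $f\in D_n$ with $\|f-g\|<\vep$, then $\zeta_f\in\jt A$ and $\zeta_f^{\,2}$, the pullback of $f$, is within $\vep$ of $g$, so $\{g^2:g\in\jt A\}$ is dense.

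Finally the fiberwise statements. Fix $x\in X$. The fiber $\pi^{-1}(x)$ is the inverse limit of finite products of the two-point sets $\{\pm\sqrt{f(x)}\}$, hence compact, metrizable, and totally disconnected. On $\pi^{-1}(x)$ each $f\circ\pi$ is the constant $f(x)$, while for $f(x)\ne0$ one has $\overline{\zeta_f}=(|f(x)|/f(x))\,\zeta_f$, a unimodular multiple of $\zeta_f$; thus the restriction of $\jt A$ to the fiber is self-adjoint, contains the constants, and separates points (two distinct points differ in some coordinate $\zeta_f$ with $f(x)\ne0$), so by Stone--Weierstrass it is dense in $C(\pi^{-1}(x))$, giving (g). For (h), since a nowhere-vanishing function has its root adjoined at infinitely many stages, the fiber carries infinitely many genuine two-point factors and so has no isolated points; being perfect, compact, metrizable, and totally disconnected, it is a Cantor set. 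I expect the real obstacle to be not any one computation but two bookkeeping matters: arranging the countable iterated extension so that $\jt X$ stays metrizable while (f) still holds, and ensuring in (h) that \emph{every} fiber, including those over points where many adjoined functions vanish, remains perfect, which is precisely why I would always adjoin the root of a fixed nonvanishing function; the preservation of naturality (e) is the deepest ingredient, but it is available off the shelf from Cole's theory.
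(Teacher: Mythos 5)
Your overall route is the same as the paper's: the paper's proof of this proposition consists of citing the standard theory of Cole extensions (Cole's thesis, Feinstein's earlier paper, and Stout's book) for items (a)--(g), and it proves only (h), by a sketch. Your proposal reconstructs exactly that construction, likewise deferring the genuinely deep items (d) and (e) to Cole's theory. Your treatment of (h) also matches the paper's sketch in substance: the paper observes that each one-step fiber is a countable product of two-point root sets (this is automatic, since the functions in $A^n$ vanishing at a given point form a nowhere dense hyperplane, so any dense subset $D_n$ contains infinitely many functions nonvanishing there), whereas you force a two-point factor at every stage by always adjoining $\sqrt{1}$; either device makes every fiber of every bonding map have at least two points, and your flip-a-late-coordinate argument then shows each inverse-limit fiber is perfect, hence a Cantor set.

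There is, however, one step that fails as written, in your verification of (g). You claim that on $\pi^{-1}(x)$ every adjoined coordinate satisfies $\overline{\zeta_f}=(|f(x)|/f(x))\,\zeta_f$, so that the restriction of $\jt{A}$ to the fiber is self-adjoint and Stone--Weierstrass applies directly. That formula is valid only for roots adjoined at the first stage. For $f\in D_n$ with $n\ge 1$, $f$ is a function on the intermediate space $X_n$, not on $X$: the symbol $f(x)$ is undefined, the pullback of $f$ is not constant on $\pi^{-1}(x)$, and $\overline{\zeta_f}$ is not a constant unimodular multiple of $\zeta_f$ on that fiber (it is such a multiple only on each fiber of the single step $X_{n+1}\to X_n$). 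For the same reason $\pi^{-1}(x)$ is not literally an inverse limit of products of the sets $\{\pm\sqrt{f(x)}\}$. The conclusion (g) is true (and the paper treats it, like (a)--(f), as standard), but the proof must be run stage by stage: letting $F_n$ denote the stage-$n$ fiber over $x$ and $\rho\colon X_{n+1}\to X_n$ the one-step projection, show by induction that the uniform closure of $A^n|F_n$ is all of $C(F_n)$. In the inductive step this closure contains $C(F_n)\circ\rho$ and each coordinate $\zeta_f$, and one recovers $\overline{\zeta_f}$ in the closed algebra via, for example, the uniform limit $\overline{\zeta_f}=\lim_{\vep\to 0}\zeta_f\cdot\bigl(\bigl(\overline f\,|f|/(|f|^2+\vep)\bigr)\circ\rho\bigr)$, or alternatively via Bishop's antisymmetry localization to the one-step fibers, where your conjugation formula does hold; then Stone--Weierstrass gives $C(F_{n+1})$, and a final Stone--Weierstrass argument passes to the inverse limit. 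With this repair (g) holds, the localization you invoke for the converse of (c) goes through, and the rest of your proposal is sound.
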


Everything in the above proposition is standard (see \cite{Cole}, \cite{F1}, and \cite{Stout}) except for property~(h).  Although we will use this property in our proof of Theorem~\ref{optional-theorem}, we will also point out how its use can be avoided. Property~(h) can be easily verified by anyone familiar with the Cole construction.
One observes that each time one adjoins square roots to a countable dense set of functions, each fiber is a countable product of discrete two-point spaces and hence is a Cantor set; then one notes that one also obtains Cantor set fibers when one takes the inverse limit of the resulting sequence of spaces to obtain the final space $\jt{X}$.

In the next lemma, whose easy proof we leave to the reader, we denote the characteristic function of a set $E$ by $\chi_E$.

\begin{lemma}
Let $A$ be a uniform algebra on a compact space $X$, and suppose that $|A|$ is normal on $X$.
Suppose further that $X$ can be partitioned into finitely many pairwise disjoint clopen subsets $X_1,X_2,\dots,X_n$. Then, for all nonnegative real numbers $\alpha_1,\alpha_2,\dots,\alpha_n$, the function $\sum_{k=1}^n \alpha_k \chi_{X_k}$ is in $|A|$.
\end{lemma}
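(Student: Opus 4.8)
The plan is to produce, for each index $k$, a single function $g_k\in A$ whose modulus equals the characteristic function $\chi_{X_k}$ \emph{exactly}, and then to read off the desired function as the linear combination $\sum_{k=1}^n \alpha_k g_k$.

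First I would exploit the partition. Because the sets $X_1,\dots,X_n$ are clopen and cover $X$ disjointly, each $X_k$ and its complement $X\setminus X_k=\bigcup_{j\neq k}X_j$ form a pair of disjoint closed subsets of $X$. Applying the normality of $|A|$ on $X$ to this pair produces a function $g_k\in A$ with $|g_k|=1$ on $X_k$ and $|g_k|=0$ on $X\setminus X_k$. Since $X_k$ together with $X\setminus X_k$ exhausts all of $X$, with no points left over, this says precisely that $|g_k|=\chi_{X_k}$ on the whole of $X$; in particular $g_k$ vanishes identically on every $X_j$ with $j\neq k$. This is the key place where the clopen partition is used, as it makes the indicator attainable exactly rather than merely approximately.

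Next I would set $f=\sum_{k=1}^n \alpha_k g_k$, which lies in $A$ because $A$ is a subalgebra, and hence a linear subspace, of $C(X)$. To compute $|f|$, fix $x\in X$ and let $j$ be the unique index with $x\in X_j$. Then $g_k(x)=0$ for every $k\neq j$, so $f(x)=\alpha_j g_j(x)$, and therefore $|f(x)|=\alpha_j\,|g_j(x)|=\alpha_j$, using $|g_j(x)|=1$ together with the hypothesis $\alpha_j\geq 0$. As $x$ was arbitrary, $|f|=\sum_{k=1}^n \alpha_k\chi_{X_k}$, so this function belongs to $|A|$.

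There is no real obstacle here; the only two points that warrant a moment's care are the ones already flagged: that the clopen partition lets each $\chi_{X_k}$ be realized exactly as a modulus, and that the nonnegativity of the $\alpha_k$ is exactly what licenses pulling $\alpha_j$ outside the modulus in the step $|\alpha_j g_j(x)|=\alpha_j|g_j(x)|$. Were some $\alpha_k$ allowed to be negative, the identical construction would instead yield $|f|=\sum_{k=1}^n |\alpha_k|\chi_{X_k}$, which is why the hypothesis is stated for nonnegative reals.
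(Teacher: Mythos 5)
Your proof is correct. The paper itself gives no proof of this lemma (it is explicitly ``left to the reader''), and your argument is exactly the natural one intended: the clopen partition lets normality of $|A|$ produce $g_k\in A$ with $|g_k|=\chi_{X_k}$ exactly, and the disjoint supports make the modulus of $\sum_k \alpha_k g_k$ compute termwise, with nonnegativity of the $\alpha_k$ used precisely where you flag it.
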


We now extend Proposition \ref{metric-Cole} by showing that certain additional properties are preserved when $X$ is totally disconnected. In the proof we denote by $D(z,r)$ the open disc in $\C$ with center $z$ and radius $r$.

\begin{theorem}\label{extended-Cole}
Let $A$, $X$, $\jt{A}$, $\jt{X}$, $\pi$ and $T$ be as in Proposition \ref{metric-Cole}. Suppose that $X$ is totally disconnected and that $|A|$ is normal on $X$. Then $\jt X$ is totally disconnected, and $|\jt A|$ is normal on $\jt{X}$.
\end{theorem}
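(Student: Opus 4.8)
The plan is to establish the two assertions in turn, proving total disconnectedness of $\jt{X}$ first since it underlies the second part. For total disconnectedness, suppose $C\subseteq\jt{X}$ is connected. As $\pi$ is continuous and $X$ is totally disconnected, $\pi(C)$ is a connected subset of $X$, hence a single point $x$, so $C\subseteq\pi^{-1}(x)$. But by property~(g) (or~(h)) of Proposition~\ref{metric-Cole} the fiber $\pi^{-1}(x)$ is totally disconnected, so $C$ is a singleton. Thus the only connected subsets of $\jt{X}$ are points, i.e.\ $\jt{X}$ is totally disconnected; being also compact and metrizable, it is a space to which Lemma~\ref{0-d} applies.

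For normality of $|\jt{A}|$ I would verify the condition of Lemma~\ref{equivalent-|A|}: given a closed set $E\subseteq\jt{X}$ and a point $y\notin E$, produce $g\in\jt{A}$ vanishing on a neighborhood of $y$ with $|g|=1$ on $E$. Write $x_0=\pi(y)$ and $F=\pi^{-1}(x_0)$. First I localize in the fiber direction: since $F$ is a Cantor set and $y\notin E\cap F$, pick a subset $C$ of $F$ that is clopen in $F$ with $y\in C$ and $C\cap E=\emptyset$; applying Lemma~\ref{0-d} in $\jt{X}$ gives a clopen $W\subseteq\jt{X}$ with $W\cap F=C$. Since $W\cap E\cap F=\emptyset$, the closed set $\pi(W\cap E)$ omits $x_0$, so total disconnectedness of $X$ yields a clopen $U\subseteq X$ with $x_0\in U$ and $U\cap\pi(W\cap E)=\emptyset$. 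Then $W'=W\cap\pi^{-1}(U)$ is a clopen neighborhood of $y$ with $W'\cap E=\emptyset$. It therefore suffices to treat the following: given a clopen set $W'\subseteq\jt{X}$ and a disjoint closed set $E$, find $g\in\jt{A}$ with $g=0$ on $W'$ and $|g|=1$ on $E$.

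The engine for this step is a gluing principle that transfers control from the fibers to the base. Choose a finite clopen partition $X=U_1\cup\cdots\cup U_N$; by the lemma immediately preceding this theorem (applied to $A$, using that $|A|$ is normal on $X$) there are $e_i\in A$ with $|e_i|=\chi_{U_i}$, and their pullbacks $e_i\circ\pi$ lie in $\jt{A}$ by property~(b). Suppose for each $i$ we have $g_i\in\jt{A}$ that is $0$ on $W'\cap\pi^{-1}(U_i)$ and has modulus $1$ on $E\cap\pi^{-1}(U_i)$. Then $g=\sum_{i=1}^N (e_i\circ\pi)\,g_i$ works: on $\pi^{-1}(U_j)$ every term with $i\neq j$ vanishes (as $e_i=0$ off $U_i$) while $|e_j\circ\pi|=1$, so $g=(e_j\circ\pi)\,g_j$ there; hence $|g|=|g_j|$ and $g$ has the same zero set as $g_j$ on $\pi^{-1}(U_j)$, yielding $g=0$ on $W'$ and $|g|=1$ on $E$. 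The virtue of this maneuver is that $|e_i\circ\pi|$ acts as a clopen indicator even though $\jt{A}$ (like $A$) may have no nontrivial idempotents. To obtain the local $g_i$, fix a base point in each $U_i$ and use that $\jt{A}$ restricted to the corresponding Cantor-set fiber is dense in $C(\pi^{-1}(x))$ (property~(g)); a compactness argument over the clopen piece then furnishes a single function in $\jt{A}$ with approximately the required behavior over $\pi^{-1}(U_i)$, once the partition is taken fine enough.

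The main obstacle is \emph{exactness}. The fiber density in property~(g) is only approximate, so the construction above delivers $g$ with $|g|$ close to $1$ on $E$ and $g$ close to $0$ on $W'$, not the exact equalities that normality requires. I expect to close this gap by an iterative correction: starting from such an approximate $g$, use the density of squares (property~(f)) to adjust $g$ over successively refined clopen pieces, controlling each correction by disc and annulus estimates in $\C$ (the role of the notation $D(z,r)$) so that the modified functions remain in $\jt{A}$, converge uniformly, and in the limit vanish exactly on $W'$ while having modulus exactly $1$ on $E$. Arranging that these corrections converge while simultaneously preserving both the vanishing and the modulus constraints is the technical heart of the argument.
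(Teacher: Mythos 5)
Your first half (total disconnectedness of $\jt{X}$ via connected sets mapping to points and Proposition~\ref{metric-Cole}(g)) is correct and is how the paper disposes of that claim, and your overall skeleton for the second half also parallels the paper's: verify the condition of Lemma~\ref{equivalent-|A|}, treat the base direction and the fiber direction separately, and glue using functions pulled back from $A$ whose moduli are characteristic functions of clopen sets (your $e_i$ with $|e_i|=\chi_{U_i}$ are exactly the paper's $f_1,f_2$ with $|f_1|=\chi_V$, $|f_2|=\chi_{X\setminus V}$, supplied by the unlabeled clopen-partition lemma). But the step you explicitly defer --- converting the \emph{approximate} fiber separation coming from density of $\jt{A}|\pi^{-1}(x)$ in $C(\pi^{-1}(x))$ into \emph{exact} equalities $g=0$ near $y$ and $|g|=1$ on $E$ --- is the actual heart of the proof, and the tool you propose for it is the wrong one. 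Density of squares (Proposition~\ref{metric-Cole}(f)) says nothing about the moduli of functions in $\jt{A}$; a uniform algebra gives you no way to replace $g$ by $g/|g|$ or otherwise renormalize moduli, and there is no evident mechanism by which an iteration of square-root-type corrections could simultaneously preserve exact vanishing on a neighborhood of $y$ and drive $|g|$ to be exactly $1$ on $E$. As written, your argument produces only $|g|$ close to $1$ on $E$ and $g$ close to $0$ near $y$, which does not verify Lemma~\ref{equivalent-|A|}.

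The missing idea is a functional-calculus argument via Runge's theorem, and this is what the notation $D(z,r)$ is really for. With $x=\pi(y)$ and $Y=\pi^{-1}(x)$, choose (as you do) a relatively clopen $K\subset Y$ containing $y$ and disjoint from $E$, and $h\in\jt{A}$ with $\|h\|_K<1/3$ and $\|h-1\|_{Y\setminus K}<1/3$; a tube-lemma compactness argument then gives a compact neighborhood $N$ of $x$ with $h(\pi^{-1}(N))\subset D(0,1/3)\cup D(1,1/3)$ and $\|h-1\|_{E\cap\pi^{-1}(N)}<1/3$. Since the two discs are disjoint, Runge's theorem provides polynomials $p_n$ converging uniformly to $0$ on $D(0,1/3)$ and to $1$ on $D(1,1/3)$; the functions $p_n\circ h$ lie in $\jt{A}$ and converge uniformly on $\pi^{-1}(N)$ to a function taking \emph{exactly} the values $0$ and $1$, equal to $0$ on a neighborhood of $y$ and to $1$ on $E\cap\pi^{-1}(N)$. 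The convergence is only on $\pi^{-1}(N)$, so one first multiplies by $f\circ\pi$, where $f\in A$ satisfies $f=0$ on $X\setminus N$ and $|f|=1$ on a clopen neighborhood $V$ of $x$ (normality of $|A|$ on $X$); then $\bigl((f\circ\pi)(p_n\circ h)\bigr)$ converges uniformly on all of $\jt{X}$, so its limit $a$ lies in the closed algebra $\jt{A}$, vanishes on a neighborhood of $y$, and has modulus exactly $1$ on $E\cap\pi^{-1}(V)$. Setting $g=a(f_1\circ\pi)+f_2\circ\pi$ finishes the proof. Note also that this route needs fiber-direction work over the single fiber $\pi^{-1}(x)$ only --- over $X\setminus V$ the pullback $f_2\circ\pi$ already has modulus exactly $1$ --- so your fine clopen partition of the whole base, with a separate $g_i$ over every piece, is unnecessary; but without the Runge idempotent step (or a concrete substitute for it), the proof does not close.
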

\begin{proof}
Since $X$ is totally disconnected, the fact that $\jt{X}$ is also totally disconnected follows easily from Proposition \ref{metric-Cole}(g).

We prove that $|\jt A|$ is normal on $\jt{X}$ by verifying the condition in Lemma \ref{equivalent-|A|}.

Let $E$ be a closed subset of $\jt{X}$ and let $y \in \jt{X} \setminus E$. Set $x=\pi(y)$.

Suppose first that $x \notin \pi(E)$. Then there exists $f\in A$ such that $f$ vanishes on a neighborhood of $x$ and $|f|=1$ on $\pi(E)$. Set $g=f\circ \pi$. Then $g$ is in $\jt{A}$ by Proposition~\ref{metric-Cole}(b) and vanishes on a neighborhood of $y$, and $|g|=1$ on $E$, as required.

Now suppose instead that $x \in \pi(E)$, and set $Y=\pi^{-1}(x)$. Since $Y$ is totally disconnected, we can choose a relatively clopen subset $K$ of $Y$ that contains $y$ and is disjoint from $E$.  Since $\jt{A}|Y$ is dense in $C(Y)$, there exists $h \in \jt{A}$ such that $\|h\|_K < 1/3$ and $\|h-1\|_{Y\sm K}<1/3$.
Now choose a compact neighborhood $N$ of $x$ such that $h(\pi^{-1}(N)) \subset D(0,1/3) \cup D(1,1/3)$.
Shrinking $N$ if necessary, we can also assume that $\|h-1\|_{E\cap \pi^{-1}(N)} <1/3$.
Set $\jt{N}=\pi^{-1}(N)$.

By Runge's theorem, there exists a sequence of polynomials $(p_n)$ converging uniformly to $0$ on $D(0,1/3)$ and uniformly to $1$ on $D(1,1/3)$. Then the sequence $(p_n\circ h)$ converges uniformly on $\jt{N}$ to an idempotent in $C(\jt{N})$ that is identically $0$ on a neighborhood of $y$ and identically $1$ on $E\cap \jt{N}$.

Since $|A|$ is normal on  $X$, there exists $f \in A$ such that $f=0$ on $X\setminus N$ and $|f|=1$ on some clopen neighborhood $V$ of $x$. Then the sequence of functions $\bigl((f\circ \pi) (p_n\circ h)\bigr)$ converges uniformly on $\jt{X}$ to a function $a \in \jt{A}$ that vanishes on $\jt{X} \setminus \jt{N}$ and on a neighborhood of $y$ and has modulus $1$ on $E \cap \pi^{-1}(V)$.

Since $|A|$ is normal on  $X$, there exist functions $f_1$ and $f_2$ in $A$ such that $|f_1|=\chi_V$ and $|f_2|=\chi_{X\setminus V}$ on $X$. Set $g= a (f_1\circ \pi) + f_2 \circ \pi \in \jt{A}$. Then $g$ vanishes on a neighborhood of $y$ and has modulus $1$ on $E$, as required.
\end{proof}

Given the above results, the proof of Theorem~\ref{optional-theorem} is quite short.

\bpf[Proof of Theorem~\ref{optional-theorem}]
By Theorem~\ref{main-theorem}, there is a nontrivial uniform algebra $A$ on the Cantor set $\JCantor$ such that $|A|$ is normal on $\JCantor$. We apply Proposition~\ref{metric-Cole} to $A$, with $X=\JCantor$, to obtain a nontrivial uniform algebra $\jt{A}$ on the compact metrizable space $\jt{X}$. 
By Proposition~\ref{metric-Cole}(f), $\{f^2:f\in\jt{A}\}$ is dense in $\jt{A}$.
By Theorem~\ref{extended-Cole}, $|\jt{A}|$ is normal on $\jt{X}$. 
Also by Theorem~\ref{extended-Cole}, $\jt{X}$ is totally disconnected, and by Proposition~\ref{metric-Cole}(h), $\jt{X}$ has no isolated points.  Thus $\jt{X}$ is a Cantor set.
\epf

The use of Proposition~\ref{metric-Cole}(h) in the above proof can be avoided by invoking Corollary~\ref{Rudin} to eliminate isolated points (that are actually not present).  With that approach one obtains a uniform algebra with the additional property of being antisymmetric.


\end{document}